\definecolor{purple}{rgb}{.9,0,.9}
\definecolor{newred}{RGB}{180,20,5}
\definecolor{newgreen}{RGB}{1,129,30}
\let\orgdescriptionlabel\descriptionlabel
\renewcommand*{\descriptionlabel}[1]{%
  \let\orglabel\label
  \let\label\@gobble
  \phantomsection
  \edef\@currentlabel{#1}%
  \let\label\orglabel
  \orgdescriptionlabel{#1}%
}
\newcommand{\Nat}{\mathbb{N}}
\newcommand{\Real}{\mathbb{R}}
\newcommand{\cB}{{\cal B}}\newcommand{\cC}{{\cal C}}
\newcommand{\cH}{{\cal H}}
\newcommand{\cL}{{\cal L}}
\newcommand{\cM}{{\cal M}}
\newcommand{\cU}{{\cal U}}
\newcommand{\cW}{{\cal W}}
\newcommand{\ve}{\varepsilon}
\newtheorem{theorem}{Theorem}[section]
\newtheorem{lemma}[theorem]{Lemma}
\newtheorem{proposition}[theorem]{Proposition}
\newtheorem{remark}[theorem]{Remark}
\def\Xint#1{\mathchoice
{\XXint\displaystyle\textstyle{#1}}%
{\XXint\textstyle\scriptstyle{#1}}%
{\XXint\scriptstyle\scriptscriptstyle{#1}}%
{\XXint\scriptscriptstyle\scriptscriptstyle{#1}}%
\!\int}
\def\XXint#1#2#3{{\setbox0=\hbox{$#1{#2#3}{\int}$ }
\vcenter{\hbox{$#2#3$ }}\kern-.6\wd0}}
\def\dashint{\Xint-}
\newcommand{\beqn}{\begin{equation}}
\newcommand{\eeqn}{\end{equation}}
\newcommand{\norm}[1]{\left\|#1\right \|}
\def\weakstar{\stackrel{*}{\rightharpoonup}}
\newcommand{\mres}{\mathbin{\vrule height 1.6ex depth 0pt width
		0.13ex\vrule height 0.13ex depth 0pt width 1.3ex}}  
\title{A beam that can only bend on the Cantor set}
\author{
Roberto Paroni$^1$ \!\!\!\!\! \and Brian Seguin*$^2$
}
\begin{document}

\maketitle

\vspace{-1cm}
\begin{center}
{\small
$^1$ DICI,
Universit\`a di Pisa\\ 
Largo Lucio Lazzarino 1, 56122 Pisa, Italy\\
\href{mailto:roberto.paroni@unipi.it}{roberto.paroni@unipi.it}\\[8pt]
$^2$  Department of Mathematics and Statistics\\ 
Loyola University Chicago, Chicago, IL 60660, USA\\
\href{mailto:bseguin@luc.edu}{bseguin@luc.edu}\\[8pt]
}
\end{center}


\begin{abstract}
In this work we address the following question: is it possible for a one-dimensional, linearly elastic beam to only bend on the Cantor set and, if so, what would the bending energy of such a beam look like? We answer this question by considering a sequence of beams, indexed by $n$, each one only able to bend on the set associated with the $n$-th step in the construction of the Cantor set and compute the $\Gamma$-limit of the bending energies. 
The resulting energy in the limit has a structure similar to the traditional bending energy, a key difference being that the measure used for the integration is the Hausdorff measure of dimension $\ln 2/\ln 3$, which is the dimension of the Cantor set.
\end{abstract}

\section{Introduction}
Our main goal is to understand the deflection of a one-dimensional, linearly elastic beam that can only bend on a Cantor set subject to a particular choice of boundary conditions and loads.  To achieve this, we consider a sequence of beams, parametrized by $n\in\mathbb{N}$, that can only bend on the set corresponding to the $n$-th step in the construction of the Cantor set. See Figure \ref{fig1}.

\begin{figure}[h!]
\centering
\begin{tikzpicture}[scale=0.9]
\def \s{-0.35}
\draw[thick,blue] (0,0) -- (6,0) node[right, black] {n=0}; 
\draw[thick,blue] (0,\s) -- (6,\s)  node[right, black] {n=1}; 
\draw[ultra thick,black] (2,\s) -- (4,\s);
\draw[thick,blue] (0,2*\s) -- (6,2*\s) node[right, black] {n=2}; 
\draw[ultra thick,black] (2,2*\s) -- (4,2*\s);
\draw[ultra thick,black] (0.67,2*\s) -- (2-0.67,2*\s);
\draw[ultra thick,black] (4.67,2*\s) -- (6-0.67,2*\s);
\draw[thick,blue] (0,3*\s) -- (6,3*\s)  node[right, black] {n=3}; 
\draw[ultra thick,black] (2,3*\s) -- (4,3*\s);
\draw[ultra thick,black] (0.67,3*\s) -- (2-0.67,3*\s);
\draw[ultra thick,black] (4.67,3*\s) -- (6-0.67,3*\s);
\draw[ultra thick,black] (0.22,3*\s) -- (0.45,3*\s);
\draw[ultra thick,black] (2-0.45,3*\s) -- (2-0.22,3*\s);
\draw[ultra thick,black] (4.22,3*\s) -- (4.45,3*\s);
\draw[ultra thick,black] (6-0.45,3*\s) -- (6-0.22,3*\s);

\begin{scope}[xshift=230]

\draw[thick,blue] (0,0) -- (6,0) node[right, black] {n=2}; 
\draw[ultra thick,black] (2,0) -- (4,0);
\draw[ultra thick,black] (0.67,0) -- (2-0.67,0);
\draw[ultra thick,black] (4.67,0) -- (6-0.67,0);
\def \h{-10}
\begin{scope}[yshift=\h]
     \draw[domain=0:0.67, smooth, variable=\x, thick, blue] plot ({\x}, {(\x-18)*\x*\x/400});
     \draw[domain=2-0.67:2, smooth, variable=\x, thick, blue] plot ({\x}, {(\x-18)*\x*\x/400});
     \draw[ultra thick, black] (0.67, {(0.67-18)*0.67*0.67/400}) -- (1.33, {(1.33-18)*1.33*1.33/400});
     \draw[domain=4:4+0.67, smooth, variable=\x, thick, blue] plot ({\x}, {(\x-18)*\x*\x/400});
     \draw[ultra thick, black] (2, {(2-18)*2*2/400}) -- (4, {(4-18)*4*4/400});
      \draw[domain=6-0.67:6, smooth, variable=\x, thick, blue] plot ({\x}, {(\x-18)*\x*\x/400});
     \draw[ultra thick, black] (4.67, {(4.67-18)*4.67*4.67/400}) -- (5.33, {(5.33-18)*5.33*5.33/400});
\end{scope}   
\end{scope}
  \end{tikzpicture}
\caption{On the left the first three steps in the construction of the Cantor set are shown. The thin (blue) parts, that lead to the Cantor set, are deformable while the thick (black) parts are considered rigid. On the right it is depicted a possible deformation for a beam corresponding at the second step in the construction of the Cantor set.}\label{fig1}
\end{figure}
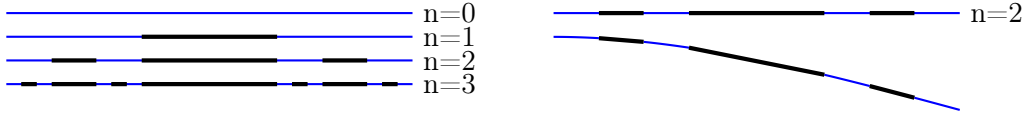
Let $\cC=\bigcap_{n\in\Nat} \cC_n$ be the Cantor set constructed from the interval $[0,\ell]$, where $\cC_n$ is the set corresponding to the $n$-th step in the construction of the Cantor set. To the $n$-beam, i.e., the beam corresponding to the $n$-th step, we associate a bending elastic energy $E^{\tiny el}_n$
defined by
\beqn
E^{\tiny el}_n(u)\coloneqq\tfrac{1}{2}\int_{0}^\ell b_n |u''|^2 d\cL,
\eeqn
where $b_n$ is the bending stiffness, $u$ is the transversal displacement, and $\ell$ is the length of the beam.
The part of the beam off of $\cC_n$ is modeled as rigid by setting the bending stiffness equal to $+\infty$ off of $\cC_n$. Since the measure of  $\cC_n$  goes to zero, if we keep the bending stiffness on $\cC_n$ independent of $n$, the beam would become more and more rigid as $n$ increases---that is, if we apply the same load to each $n$-beam the sequence of the maximum displacements will tend to zero. Said differently, the limiting beam would be rigid. Therefore, to avoid what would be a trivial result, we appropriately let $b_n$ on $\cC_n$ decrease to zero. We show that  by setting  $b_n=(2/3)^n b$, with $b$ a constant, the sequence of energies is bounded above and the limiting beam is not rigid. In passing, we note that the measure of $\cC_n$ is exactly $(2/3)^n \ell$, which partly justifies the scaling of the bending stiffness.

With these choices made, we use the theory of $\Gamma$-convergence to determine the energy of the limiting beam. We refer to Section \ref{sec3} for the specification of the topology under which the $\Gamma$-limit is taken, we here only describe the results informally. The limiting energy turns out to be finite only for displacements $u$ that are continuous, differentiable, and whose second derivative is a Radon measure.  Also, $u''$ is absolutely continuous with respect to the $s$-dimensional Hausdorff measure restricted to the Cantor set $\cC$, where $s=\ln 2 / \ln 3$ is the Hausdorff dimension of the Cantor set. In mechanical terms we can say that the limiting beam can ``bend'' only on the Cantor set, that has measure zero with respect to the Lebesgue measure. Setting $k_\cC$ equal to the Radon-Nikod\'ym derivative of $u''$ with respect to the $s$-dimensional Hausdorff measure restricted to the Cantor set $\cC$, the limiting elastic energy that we find is
\beqn
E^{el}_\infty(u)= \tfrac{1}{2}\int_\cC \frac{b\cH^s(\cC)}{\ell} \, k_\cC^2 \, d\cH^s,
\eeqn
where $\cH^s$ is the $s$-dimensional Hausdorff measure. From the limiting energy, but also from the definition, we deduce that  $k_\cC$ is the (linearized) curvature of the beam. This curvature is ``concentrated'' on the Cantor set.

\section{Preliminaries}
We here  introduce the notation and some of the results that will be used in the sections that follow. The proofs of all quoted results can be found in the book by Ambrosio, Fusco, and Pallara \cite{Ambrosio2000}.

We denote by $\cL$ and $\cH^s$ the Lebesgue and the $s$-dimensional Hausdorff measures on $\Real$, respectively. For any (real) measure $\mu$, we denote by $|\mu|$ the total variation of $\mu$ and by $\mu\mres \Omega$ the restriction of $\mu$ to the measurable set $\Omega$, i.e., $\mu\mres \Omega(A)=\mu(\Omega\cap A)$.  The push-forward of a measure $\mu$ by a function $\phi$ 
is denoted by $\phi_{\#}\mu$, i.e.,  $\phi_{\#}\mu(A)= \mu(\phi^{-1}(A))$. If a (real) measure $\mu$ is absolutely continuous with respect to a positive measure $\eta$, we write $\mu\ll\eta$ and denote the Radon-Nikod\'ym derivative by  $\frac{\mu}{\eta}$. Note that if
$
\gamma=\frac{\mu}{\eta},
$ than $
|\gamma|=\frac{|\mu|}{\eta}.
$
For an open set $\Omega\subseteq\Real$, we denote by $\mathcal{M}(\Omega)$ the space of Radon measures on $\Omega$. We say that a sequence $(\mu_n)\in \mathcal{M}(\Omega)$ weakly$^*$ converges to  $\mu\in \mathcal{M}(\Omega)$, and write
$\mu_n\weakstar \mu$ in $\mathcal{M}(\Omega)$, if 
$$
\lim_{n\to\infty}\int_\Omega \psi\,d\mu_n=\int_\Omega \psi\,d\mu
$$
for all continuous functions $\psi$ with compact support in $\Omega$, i.e., for all $\psi\in C_c(\Omega)$. 
If $\mu_n\weakstar \mu$ and $\phi$ is continuous, than $\phi_{\#}\mu_n\weakstar \phi_{\#}\mu,$. Also,
if $\mu_n\weakstar \mu$ and $|\mu_n|\weakstar \lambda$, than $\lambda\ge |\mu|$.

Given a bounded open subset $\Omega$ of $\Real$ and $1 \le m,p \le +\infty$, we use standard notation for the Sobolev and Lebesgue spaces $W^{m,p}(\Omega)$ and $L^p(\Omega)$. The space of functions of bounded variation on $\Omega$ is denoted by 
$$
BV(\Omega)=\{u\in L^1(\Omega)\ |\  u'\in \mathcal{M}(\Omega)\};
$$
we refer to \cite{Ambrosio2000} for the main properties of this space.

\section{Main result}\label{sec3}

The goal of this section is to understand the deflection of a one-dimensional, linearly elastic beam that can only bend on a Cantor set subject to a particular choice of boundary conditions and loads.  To achieve this, we consider a sequence of energies $E_n$ that model a beam that can only bend on the $n$-th step in the construction of the Cantor set and then compute the $\Gamma$-limit.  

Towards that end, consider a one-dimensional beam that has one end embedded in a rigid wall.  Assume the length of the beam outside of the wall is $\ell$ and the length inside is $\delta$.  Choose a coordinate system so that the material points of the beam can be described by $x\in(-\delta,\ell)$.  The displacement of the beam is described by a function $u:(-\delta,\ell)\rightarrow\Real$, where positive $u$ corresponds to a downward deflection.  A single point load in the downward direction of magnitude $P>0$ is applied to the beam at $x=\ell$.

The bending stiffness of the beam depends on $n$.  To describe this stiffness, we first introduce the $n$-th step in the construction of the middle third Cantor set on $[0,\ell]$.  To do so we introduce the functions $\psi_1,\psi_2:\Real\rightarrow\Real$ defined by
\beqn
\psi_1(x)\coloneqq\frac{x}{3}\quad\text{and}\quad \psi_2(x)\coloneqq\frac{x+2\ell}{3},\qquad x\in\Real.
\eeqn
Set $\cC_0\coloneqq[0,\ell]$, and iteratively define
\beqn
\cC_n\coloneqq\psi_1(\cC_{n-1})\cup\psi_2(\cC_{n-1}),\qquad n\in\Nat.
\eeqn
Notice that $\cC_n$ is the union of $2^n$ intervals each of length $3^{-n}\ell$.  Denote these intervals by $I_{n,i}$, $i\in\{1,\dots,2^n\}$, so that
\beqn\label{Cnint}
\cC_n=\bigcup_{n=1}^{2^n}I_{n,i}\qquad \text{and}\qquad |I_{n,i}|=3^{-n}\ell.
\eeqn
The Cantor set $\cC$ is then given by
\beqn
\cC=\bigcap_{n\in\Nat} \cC_n.
\eeqn
With this notation in place, we can specify the material properties of the beam at step $n$.  For a fixed $b>0$, consider the bending stiffness $b_n:(-\delta,\ell)\rightarrow\Real\cup\{\infty\}$ defined by
\beqn
b_n(x)\coloneqq\begin{cases}
\big(\tfrac{2}{3}\big)^n b & \text{if}\  x\in\cC_n,\\
\infty & \text{if}\  x\in(-\delta,\ell)\setminus \cC_n,
\end{cases}
\eeqn
and the energy functional $E_n:W^{2,2}(-\delta,\ell)\to \Real\cup\{+\infty\}$ by
\beqn\label{Endef}
E_n(u)\coloneqq
\begin{cases}
\displaystyle\tfrac{1}{2}\int_{0}^\ell b_n |u''|^2 d\cL-Pu(\ell) & \mbox{if } u\in \cW,\\
+\infty & \mbox{otherwise},
\end{cases}
\eeqn
where
\beqn
\cW:=\{u\in W^{2,2}(-\delta,\ell)\ |\ u=0\ \text{on}\ (-\delta,0]\}.
\eeqn
\begin{remark}
{\rm We have here chosen the simplest boundary and loading conditions. These can be easily changed. For instance, our main result still holds for any loading that can be taken into account with a functional that is continuous with respect to the convergence under which we  take the $\Gamma$-limit.
Also, we chose to implement the boundary condition by imposing the displacement $u=0$ on $(-\delta,0]$ in place of $u(0)=u'(0)=0$ because, as it will be soon clear,  in the point $x=0$ the function $u'$ may be discontinuous. }
\end{remark}

Having infinite bending stiffness off of $\cC_n$ suggests that the beam should not be able to bend on that set.  Our first result formulates and proves this fact.
\begin{proposition}\label{udpzero}
Let $n$ be fixed. If $u\in\cW$ and $E_n(u)<\infty$, then $u''=0$ a.e.~on $(-\delta,\ell)\setminus \cC_n$.
\end{proposition}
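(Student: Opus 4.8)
The plan is to deduce the conclusion directly from the finiteness of the bending integral. First I would observe that, since $u\in W^{2,2}(-\delta,\ell)$, the Sobolev embedding $W^{2,2}(-\delta,\ell)\hookrightarrow C^1([-\delta,\ell])$ guarantees that $u(\ell)$ is finite, so the load term $-Pu(\ell)$ is finite. Hence the hypotheses $u\in\cW$ and $E_n(u)<\infty$ force the elastic part to be finite:
\[
\int_0^\ell b_n\,|u''|^2\,d\cL<\infty .
\]

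Next I would split the domain according to where the stiffness is finite or infinite. Set $A:=[0,\ell]\setminus\cC_n$; by \eqref{Cnint} this is the complement in $[0,\ell]$ of a union of $2^n$ intervals of total length $(2/3)^n\ell$, so $\cL(A)=\ell\,(1-(2/3)^n)$, and $b_n\equiv\infty$ on $A$. Since the integrand $b_n|u''|^2$ is nonnegative, its contribution over $A$ must itself be finite. With the usual convention $\infty\cdot 0=0$, the integrand equals $+\infty$ at every point of $A$ where $u''\ne 0$, so finiteness of $\int_A b_n|u''|^2\,d\cL$ is possible only if $\cL(\{x\in A:u''(x)\ne 0\})=0$; that is, $u''=0$ a.e.\ on $A=[0,\ell]\setminus\cC_n$.

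To cover the remaining part of $(-\delta,\ell)\setminus\cC_n$, namely the portion inside the wall, I would simply invoke the constraint defining $\cW$: since $u\equiv 0$ on $(-\delta,0]$, its distributional second derivative vanishes there, hence $u''=0$ a.e.\ on $(-\delta,0]$. Combining the two regions gives $u''=0$ a.e.\ on $(-\delta,\ell)\setminus\cC_n$, as claimed.

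The argument is essentially measure-theoretic bookkeeping, so I do not expect a deep obstacle; the only point requiring care is the handling of the extended-real-valued integrand, which I would make rigorous rather than rely on the informal convention $\infty\cdot 0=0$. To this end I would introduce the truncations $b_n^M:=\min\{b_n,M\}$ for $M>0$. From $b_n^M|u''|^2\le b_n|u''|^2$ we get the uniform bound $\int_A b_n^M|u''|^2\,d\cL\le\int_0^\ell b_n|u''|^2\,d\cL<\infty$. But $b_n^M\equiv M$ on $A$, so $\int_A b_n^M|u''|^2\,d\cL=M\int_A|u''|^2\,d\cL$ is bounded uniformly in $M$; letting $M\to\infty$ forces $\int_A|u''|^2\,d\cL=0$, i.e.\ $u''=0$ a.e.\ on $A$, completing the step without appeal to any convention.
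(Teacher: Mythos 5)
Your proof is correct, and it reaches the conclusion by a genuinely shorter route than the paper on the one step where the two differ: extracting finiteness of the elastic integral from $E_n(u)<\infty$. You observe that the embedding $W^{2,2}(-\delta,\ell)\hookrightarrow C^1([-\delta,\ell])$ makes the load term $-Pu(\ell)$ automatically finite, so $\int_0^\ell b_n|u''|^2\,d\cL<\infty$ follows at once. The paper instead runs a coercivity argument: from $u(0)=u'(0)=0$ it derives $|u(\ell)|^2\le \ell^3\int_0^\ell |u''|^2\,d\cL$ via Jensen, bounds $Pu(\ell)\le \tfrac{\alpha}{2}|u(\ell)|^2+\tfrac{P^2}{2\alpha}$, and chooses $\alpha=\tfrac12\big(\tfrac23\big)^n\ell^{-3}b$ to absorb the quadratic term into the stiffness, yielding the quantitative bound $\tfrac14\int_0^\ell b_n|u''|^2\,d\cL\le E_n(u)+\tfrac{P^2}{2\alpha}$. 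For this fixed-$n$ proposition your qualitative shortcut is perfectly adequate and arguably cleaner; what the paper's absorption scheme buys is exactly the estimate that must later be made uniform in $n$ in the compactness result (Theorem~\ref{compactThm}), where finiteness of each $u_n(\ell)$ alone would not give the needed bound $\sup_n \big(\tfrac23\big)^n\int_{\cC_n}|u_n''|^2\,d\cL<\infty$. Beyond that, the two arguments coincide: both conclude from $b_n\equiv\infty$ off $\cC_n$ that $u''=0$ a.e.\ there, and both dispose of $(-\delta,0]$ via the constraint $u\equiv 0$ in $\cW$. Your truncation $b_n^M:=\min\{b_n,M\}$, giving $M\int_A|u''|^2\,d\cL\le \int_0^\ell b_n|u''|^2\,d\cL$ for all $M$ and hence $\int_A|u''|^2\,d\cL=0$, makes rigorous a step the paper handles implicitly through the convention $\infty\cdot 0=0$ (``this is only possible if $u''=0$ a.e.''), which is a worthwhile refinement even if the convention is standard.
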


\begin{proof}
It follows from the definition of $\cW$ that $u(0)=u'(0)=0$.  Thus, we have
\beqn\label{basicuineq}
|u(\ell)|\leq \int_0^\ell |u'(y)|dy\leq \int_0^\ell \int_0^y|u''(x)|dxdy\leq \ell \int_{0}^\ell |u''(x)|dx,
\eeqn
and from Jensen's inequality,
\beqn
|u(\ell)|^2\leq \ell^3 \int_{0}^\ell |u''|^2d\cL.
\eeqn
So, for any $\alpha>0$, we find that
\begin{align}
\nonumber\infty>E_n(u)&\geq\tfrac{1}{2}\int_0^\ell b_n|u''|^2d\cL-\frac{\alpha}{2}|u(\ell)|^2-\frac{P^2}{2\alpha}\\
\label{udpzeromaineq}&\geq \tfrac{1}{2}\int_0^\ell \big (b_n-\alpha\ell^3\big) |u''|^2 d\cL-\frac{P^2}{2\alpha}.
\end{align}
Using $\alpha=\tfrac{1}{2}\big(\frac{2}{3}\big)^n\ell^{-3}b$ in the previous inequality allows us to conclude that
\beqn
\infty>\tfrac{1}{4}\int_0^\ell b_n|u''|^2d\cL.
\eeqn
Since $b_n$ is infinite off of $\cC_n$, this is only possible if $u''=0$ a.e.~on $(0,\ell)\setminus\cC_n$.  We know $u''=0$ on $(-\delta,0]$ from the fact that $u\in\cW$.
\end{proof}

Next we establish two preliminary lemmas that will be useful in obtaining a compactness result and the $\Gamma$-convergence.  The first one is a consequence of Theorem~2.34 of Ambrosio, Fusco, and Pallara \cite{Ambrosio2000}, but we include a simple proof here that works in the case we require.
\begin{lemma}\label{abscontm}
Let $(\mu_n),(\eta_n)\in \cM(-\delta,\ell)$ be sequences of Radon measures and  $\mu,\eta\in \cM(-\delta,\ell)$, with each $\eta_n$ and $\eta$ being positive, such that $\mu_n\weakstar \mu$, $\eta_n\weakstar \eta$, and $\mu_n\ll\eta_n$ for all $n\in\Nat$.  If
\beqn\label{supcond}
\sup_{n\in\Nat} \int_{-\delta}^\ell \Big(\frac{\mu_n}{\eta_n}\Big)^2 d\eta_n<\infty,
\eeqn
than $\mu\ll\eta$ and
\beqn\label{lemliminf}
\liminf_{n\rightarrow \infty} \int_{-\delta}^\ell \Big(\frac{\mu_n}{\eta_n}\Big)^2 d\eta_n\geq \int_{-\delta}^\ell \Big(\frac{\mu}{\eta}\Big)^2 d\eta.
\eeqn
\end{lemma}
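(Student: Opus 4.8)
The plan is to exploit the convexity of $t\mapsto t^2$ through the elementary pointwise bound $t^2\ge 2\phi t-\phi^2$, valid for all $t,\phi\in\Real$, which turns the nonlinear integral functional into a supremum of quantities that are continuous under weak$^*$ convergence. Write $L:=\liminf_{n}\int_{-\delta}^\ell(\mu_n/\eta_n)^2\,d\eta_n$, which is finite by \eqref{supcond}. The whole argument then consists of extracting information about $\mu$ and $\eta$ from this single finite lower bound.

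First I would fix $\phi\in C_c(-\delta,\ell)$ and apply the pointwise inequality with $t=\mu_n/\eta_n$; integrating against $\eta_n$ and using $\mu_n\ll\eta_n$ (so that $(\mu_n/\eta_n)\,\eta_n=\mu_n$) gives
\beqn
\int_{-\delta}^\ell\Big(\frac{\mu_n}{\eta_n}\Big)^2 d\eta_n\ge 2\int_{-\delta}^\ell \phi\,d\mu_n-\int_{-\delta}^\ell \phi^2\,d\eta_n.
\eeqn
Since $\phi,\phi^2\in C_c(-\delta,\ell)$ and $\mu_n\weakstar\mu$, $\eta_n\weakstar\eta$, the right-hand side converges, so passing to the liminf yields $L\ge 2\int_{-\delta}^\ell\phi\,d\mu-\int_{-\delta}^\ell\phi^2\,d\eta$ for every $\phi\in C_c(-\delta,\ell)$. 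Replacing $\phi$ by $t\phi$ and requiring $t^2\int_{-\delta}^\ell\phi^2\,d\eta-2t\int_{-\delta}^\ell\phi\,d\mu+L\ge 0$ for all $t\in\Real$ (nonnegativity of this quadratic) produces, via its discriminant, the Cauchy--Schwarz-type estimate
\beqn\label{planCS}
\Big(\int_{-\delta}^\ell \phi\,d\mu\Big)^2\le L\int_{-\delta}^\ell \phi^2\,d\eta,\qquad \phi\in C_c(-\delta,\ell).
\eeqn

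This estimate is the crux: it says that the linear functional $\phi\mapsto\int_{-\delta}^\ell\phi\,d\mu$ is bounded on $C_c(-\delta,\ell)$ with respect to the $L^2(\eta)$-norm, with operator norm at most $\sqrt{L}$. Because $C_c(-\delta,\ell)$ is dense in $L^2(\eta)$ (a standard consequence of the regularity of the Radon measure $\eta$) and $L<\infty$, the Riesz representation theorem furnishes a function $g\in L^2(\eta)$ with $\|g\|_{L^2(\eta)}^2\le L$ such that $\int_{-\delta}^\ell\phi\,d\mu=\int_{-\delta}^\ell\phi\, g\,d\eta$ for all $\phi\in C_c(-\delta,\ell)$. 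Two Radon measures with the same integral against every element of $C_c(-\delta,\ell)$ coincide, so $\mu=g\eta$; this gives at once $\mu\ll\eta$ with $\mu/\eta=g$, and $\int_{-\delta}^\ell(\mu/\eta)^2\,d\eta=\|g\|_{L^2(\eta)}^2\le L$, which is precisely \eqref{lemliminf}.

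The only genuine obstacle lies in this last passage: one must recognize that the finiteness guaranteed by \eqref{supcond} is exactly what makes the bounded functional representable in $L^2(\eta)$. Once \eqref{planCS} is in hand, both conclusions---absolute continuity $\mu\ll\eta$ and the liminf inequality---fall out simultaneously, with no need to hand-construct test functions concentrating near an $\eta$-null set, as a more direct proof of absolute continuity (e.g.\ one based on the Lebesgue decomposition of $\mu$) would require.
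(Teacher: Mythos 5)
Your proof is correct, and while it shares the paper's key convexity estimate, it assembles the conclusions by a genuinely different route. The overlap: the paper's proof of the liminf inequality also starts from $0\le(\gamma_n-\varphi)^2$ with $\gamma_n=\frac{\mu_n}{\eta_n}$, integrates against $\eta_n$, passes to the limit against $C_c$ test functions to get $\liminf_n\int\gamma_n^2\,d\eta_n\ge\int[\gamma^2-(\gamma-\varphi)^2]\,d\eta$, and concludes \eqref{lemliminf} by density of $C_c(-\delta,\ell)$ in $L^2(\eta)$. The divergence is in how $\mu\ll\eta$ is obtained: the paper proves it separately and by hand, using Cauchy--Schwarz on $\big|\int\varphi\,d|\mu_n|\big|$, the lower semicontinuity of total variation under weak$^*$ convergence (the fact $\lambda\ge|\mu|$ when $|\mu_n|\weakstar\lambda$ from its preliminaries), and then testing with functions $|\varphi|\le 1$ supported in small open neighborhoods of an $\eta$-null Borel set $\cB$ to force $|\mu|(\cB)=0$ --- precisely the construction you observe your method avoids. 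You instead push the convexity bound further: rescaling $\varphi\mapsto t\varphi$ and using nonnegativity of the quadratic in $t$ gives $\big(\int\varphi\,d\mu\big)^2\le L\int\varphi^2\,d\eta$, after which Riesz representation in $L^2(\eta)$ yields $\mu=g\eta$ with $\|g\|_{L^2(\eta)}^2\le L$, delivering absolute continuity and \eqref{lemliminf} in one stroke; this is in essence von Neumann's Hilbert-space proof of the Radon--Nikod\'ym theorem. What each buys: the paper's argument is elementary and self-contained, relying only on the weak$^*$ facts it has already quoted; yours trades the measure-theoretic regularity argument for the Riesz theorem and, as a bonus, produces the density $g=\frac{\mu}{\eta}$ directly as an element of $L^2(\eta)$ with a quantitative norm bound, whereas the paper's final density step tacitly presumes $\gamma=\frac{\mu}{\eta}\in L^2(\eta)$ when approximating it by $C_c$ functions. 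Two small points you should make explicit in a polished version: if $\int\varphi^2\,d\eta=0$ your quadratic degenerates to a linear function of $t$, whose nonnegativity for all $t$ forces $\int\varphi\,d\mu=0$, so the Cauchy--Schwarz-type estimate still holds; and that same estimate is what makes the functional $\varphi\mapsto\int\varphi\,d\mu$ well defined on equivalence classes in $L^2(\eta)$ (two $C_c$ functions agreeing $\eta$-a.e.\ have equal $\mu$-integrals), which is needed before extending it from the dense subspace.
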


\begin{proof}
To establish $\mu\ll\eta$, begin by fixing $\varphi\in C_c(-\delta,\ell)$ and set $\gamma_n\coloneqq\frac{\mu_n}{\eta_n}$.  Notice that
\begin{align}
\Big|\int_{-\delta}^\ell \varphi d|\mu_n|\Big|&\leq \int_{-\delta}^\ell |\varphi| d|\mu_n|= \int_{-\delta}^\ell |\varphi| |\gamma_n| d \eta_n\leq \Big(\int_{-\delta}^\ell \varphi^2d\eta_n\Big)^{1/2}\Big(\int_{-\delta}^\ell |\gamma_n|^2d\eta_n\Big)^{1/2}.
\end{align}
Utilizing \eqref{supcond}, we can take the limit $n\rightarrow \infty$ of the previous inequality to find that
\beqn\label{muetaineq}
\Big|\int_{-\delta}^\ell \varphi d|\mu|\Big|\leq K\Big(\int_{-\delta}^\ell \varphi^2d\eta\Big)^{1/2}
\eeqn
for some $K>0$.  Suppose that $\eta(\cB)=0$ for some Borel set $\cB$.  Fix $\ve>0$ and find $\cU$ open containing $\cB$ such that $\eta(\cU)<\ve$.  Now if $|\varphi|\leq 1$ and $\text{supp}\, \varphi\subseteq \cU$, we can use \eqref{muetaineq} to obtain
\beqn
\Big|\int_{-\delta}^\ell \varphi d|\mu|\Big|\leq K \ve^{1/2}.
\eeqn
Taking the supremum over all such $\varphi$ yields $|\mu|(\cB)\leq |\mu|(\cU)\leq K \ve^{1/2}$.  Since $\ve>0$ and $\cB$ were arbitrary, this shows that $\mu\ll\eta$.  We can now set $\gamma\coloneqq\frac{\mu}{\eta}$.

To establish \eqref{lemliminf}, start with $\varphi\in C_c(-\delta,\ell)$ and notice
\beqn
0\leq (\gamma_n-\varphi)^2=\gamma_n^2-2\gamma_n\varphi+\varphi^2.
\eeqn
Utilizing this, we find that
\begin{align*}
\liminf_{n\rightarrow\infty} \int_{-\delta}^\ell \gamma_n^2 d\eta_n&\geq \liminf_{n\rightarrow \infty} \int_{-\delta}^\ell (2\gamma_n\varphi-\varphi^2) d\eta_n\\
&=\liminf_{n\rightarrow \infty} \Big[ \int_{-\delta}^\ell 2\varphi d\mu_n-\int_{-\delta}^\ell\varphi^2 d\eta_n\Big]\\
&=\int_{-\delta}^\ell 2\varphi d\mu-\int_{-\delta}^\ell\varphi^2 d\eta\\
&=\int_{-\delta}^\ell [\gamma^2-(\gamma-\varphi)^2]d\eta.
\end{align*}
Since $C_c$ functions are dense in $L^2$ and $\varphi$ was arbitrary, this implies \eqref{lemliminf}.
\end{proof}

The next lemma characterizes the weak* limit of a sequence of measures associated with the construction of the Cantor set.
\begin{lemma}\label{etaconv}
The sequence of measures $\eta_n\coloneqq\big(\tfrac{3}{2}\big)^n \cL\mres \cC_n$, $n\in\Nat$, converges weak* to
\beqn\label{measrep}
\eta=\frac{\ell}{\cH^s(\cC)}\cH^s\mres\cC\quad\text{and}\quad \eta(I_{n,i})=2^{-n}\ell,
\eeqn
where $s=\ln 2 / \ln 3$.
\end{lemma}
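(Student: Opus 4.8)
The plan is to prove the two assertions separately: first the mass formula $\eta(I_{n,i})=2^{-n}\ell$ for the candidate limit $\eta=\frac{\ell}{\cH^s(\cC)}\cH^s\mres\cC$, and then the weak* convergence $\eta_n\weakstar\eta$. The single engine behind both is the self-similarity of the construction together with the arithmetic identity $3^{-s}=\tfrac12$, which holds precisely because $s=\ln 2/\ln 3$; this is what links the Lebesgue scaling $(2/3)^n$ built into $\eta_n$ to the Hausdorff scaling of $\cC$.

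For the mass formula I would first invoke the standard fact that $0<\cH^s(\cC)<\infty$, so that $\eta$ is a well-defined finite measure. Each level-$n$ interval $I_{n,i}$ is the image of $[0,\ell]$ under an $n$-fold composition of the similarities $\psi_1,\psi_2$ (each of ratio $1/3$), and correspondingly $\cC\cap I_{n,i}$ is the image of $\cC$ under that same composition $\Psi$, a similarity of ratio $3^{-n}$. Since Hausdorff measure scales as $\cH^s(\Psi(A))=(3^{-n})^s\cH^s(A)$ and $(3^{-n})^s=(3^{-s})^n=2^{-n}$, we obtain $\cH^s(\cC\cap I_{n,i})=2^{-n}\cH^s(\cC)$. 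Multiplying by the normalization constant $\ell/\cH^s(\cC)$ gives $\eta(I_{n,i})=2^{-n}\ell$.

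The crucial observation for the convergence is that the approximating measures exhibit the \emph{same} stabilized mass distribution. For every $n\geq m$ the set $\cC_n\cap I_{m,i}$ is the union of $2^{n-m}$ intervals of length $3^{-n}\ell$, so
\[
\eta_n(I_{m,i})=\big(\tfrac32\big)^n\,2^{n-m}\,3^{-n}\ell=2^{-m}\ell,
\]
independent of $n$ and matching $\eta(I_{m,i})$. Given $\psi\in C_c(-\delta,\ell)$, I would fix a level $m$, pick a point $x_{m,i}\in I_{m,i}$ for each $i$, and compare $\psi$ with the step function $g_m:=\sum_i\psi(x_{m,i})\mathbf{1}_{I_{m,i}}$. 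For all $n\geq m$ one has exactly $\int g_m\,d\eta_n=2^{-m}\ell\sum_i\psi(x_{m,i})=\int g_m\,d\eta$, while both $|\int(\psi-g_m)\,d\eta_n|$ and $|\int(\psi-g_m)\,d\eta|$ are bounded by $\ell\,\omega_\psi(3^{-m}\ell)$, where $\omega_\psi$ is the modulus of continuity of $\psi$ and I use that both measures have total mass $\ell$ and are carried by $\bigcup_i I_{m,i}$. Hence $|\int\psi\,d\eta_n-\int\psi\,d\eta|\leq 2\ell\,\omega_\psi(3^{-m}\ell)$ for all $n\geq m$, and letting $m\to\infty$ (so $\omega_\psi(3^{-m}\ell)\to 0$ by uniform continuity) yields the claim directly.

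The main obstacle is conceptual rather than computational: recognizing that the normalization of the limit is forced by $3^{-s}=\tfrac12$, and that $\eta_n(I_{m,i})$ freezes at $2^{-m}\ell$ as soon as $n\geq m$. Once these are in hand, the convergence is a routine uniform-continuity estimate, and no weak* compactness or subsequence extraction is needed since the limit is pinned down explicitly. A minor point to handle with care is that the $I_{m,i}$ are closed intervals carrying $\cC$-points on their endpoints; but single points have zero $\cH^s$-measure and the gaps of the Cantor construction separate distinct level-$m$ intervals, so the decompositions $\eta=\sum_i\eta\mres I_{m,i}$ and $\eta_n=\sum_i\eta_n\mres I_{m,i}$ involve no overlap and the step-function computation is unambiguous.
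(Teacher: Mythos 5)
Your proof is correct, and it takes a genuinely different route from the paper's. The paper argues by compactness and uniqueness: since $|\eta_n|=\ell$, a subsequence converges weak* to some $\eta$; the exact identity $\tfrac12(\psi_{1\#}\eta_n+\psi_{2\#}\eta_n)=\eta_{n+1}$ passes to the limit to give the fixed-point relation $\tfrac12(\psi_{1\#}\eta+\psi_{2\#}\eta)=\eta$; a Hausdorff-scaling computation verifies that $\frac{\ell}{\cH^s(\cC)}\cH^s\mres\cC$ solves this relation; and an induction (showing $\eta(I_{n,i})=2^{-n}\ell$ for \emph{any} mass-$\ell$ solution) combined with the coincidence criterion, Proposition~1.8 of Ambrosio--Fusco--Pallara, gives uniqueness, which pins down every subsequential limit. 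You instead identify the limit directly: Hausdorff scaling with $3^{-s}=\tfrac12$ gives $\eta(I_{m,i})=2^{-m}\ell$ (your identification of $\cC\cap I_{m,i}$ as an exact similar copy of $\cC$ is the one point needing care, and you justify it correctly via the gaps separating the level-$m$ intervals); the count $\eta_n(I_{m,i})=\big(\tfrac32\big)^n2^{n-m}3^{-n}\ell=2^{-m}\ell$ for $n\ge m$ shows the approximants freeze at the same mass distribution; and the step-function comparison yields the quantitative bound $\big|\int\psi\,d\eta_n-\int\psi\,d\eta\big|\le 2\ell\,\omega_\psi(3^{-m}\ell)$ for all $n\ge m$. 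What each buys: your argument is elementary, self-contained, and quantitative --- it needs no weak* compactness, no subsequence extraction, and no coincidence criterion, and it delivers full-sequence convergence with an explicit rate; it also sidesteps a step the paper leaves implicit, namely that uniqueness of subsequential limits must still be invoked to upgrade convergence of a subsequence to convergence of the whole sequence. The paper's fixed-point formulation, in turn, is less computation-specific: it characterizes $\eta$ as the unique invariant (self-similar) measure of the iterated function system, a viewpoint that generalizes readily to constructions where the per-interval masses are not obtainable by so direct a count. Note finally that the two proofs obtain the second assertion of the lemma, $\eta(I_{n,i})=2^{-n}\ell$, in opposite directions: the paper deduces it from the fixed-point relation as the engine of uniqueness, while you derive it from the scaling $\cH^s(\Psi(A))=r^s\cH^s(A)$ and then use it as the engine of convergence; both derivations are sound.
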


\begin{proof}
Begin by noticing that the total variation of these measures is bounded, namely $|\eta_n|=\ell$ for all $n\in\Nat$.  Thus, there is a subsequence, also denoted by $(\eta_n)$, and $\eta\in\cM(\Real)$ such that $\eta_n\weakstar\eta$.  Since, for any measurable set $\cB$ we have that
\begin{align*}
\cL(\cB\cap\cC_{n+1})&=\cL(\cB\cap(\psi_1(\cC_{n})\cup\psi_2(\cC_{n})))=\cL(\cB\cap\psi_1(\cC_{n}))+\cL(\cB\cap\psi_2(\cC_{n}))\\
&=\tfrac 13 \cL(\psi_1^{-1}(\cB\cap\psi_1(\cC_{n})))+\tfrac 13 \cL(\psi_2^{-1}(\cB\cap\psi_2(\cC_{n})))\\
&=\tfrac 13 \cL(\psi_1^{-1}(\cB)\cap\cC_{n})+\tfrac 13 \cL(\psi_2^{-1}(\cB)\cap\cC_{n})
\end{align*}
and, hence,
\beqn
\tfrac{1}{2}(\psi_{1\#}\eta_n+\psi_{2\#}\eta_n)=\eta_{n+1}.
\eeqn
Thus, we can take the limit in $n$ to obtain
\beqn\label{measrel}
\tfrac{1}{2}(\psi_{1\#}\eta+\psi_{2\#}\eta)=\eta.
\eeqn
We now show that there is only one measure $\eta$ of total variation $\ell$ that satisfies this condition.


The value $s=\ln 2 / \ln 3$ is the Hausdorff dimension of the Cantor set and $0<\cH^s(\cC)<\infty$, see Falconer \cite{Falconer03}.  For any Borel set $\cB\subseteq [0,\ell]$ we have
\begin{align*}
\tfrac{1}{2}[(\psi_{1\#}\cH^s)(\cB)+(\psi_{2\#}\cH^s)(\cB)]&=\tfrac{1}{2}[\cH^s(\cC\cap\psi_1^{-1}(\cB))+\cH^s(\cC\cap\psi_2^{-1}(\cB))]\\
&=\tfrac{3^s}{2}[\cH^s(\psi_1(\cC)\cap \cB)+\cH^s(\psi_2(\cC)\cap \cB)]\\
&=\cH^s((\psi_1(\cC)\cup\psi_2(\cC)\cap\cB)\\
&=(\cH^s\mres\cC)(\cB).
\end{align*}
Thus, the measure $\eta=\frac{\ell}{\cH^s(\cC)}\cH^s\mres\cC$ satisfies \eqref{measrel}. 

To establish the uniqueness we will use the coincidence criterion, Proposition~1.8 in \cite{Ambrosio2000}.  We begin by showing that if $\eta$ satisfies \eqref{measrel} and $\eta(\Real)=\ell$, then $\eta(I_{n,i})=2^{-n}\ell$, which we prove by induction.  Clearly $\eta(I_{0,1})=\eta([0,\ell])=\ell$.  Now assume that $\eta(I_{n,i})=2^{-n}\ell$ for all $i\in\{1,\dots,2^n\}$ and let $I=I_{n+1,j}$ for some $j\in\{1,\dots,2^{n+1}\}$.  Notice that one of $\psi^{-1}_1(I)$ or $\psi^{-1}_2(I)$ is empty while the other is some $I_{n,i}$.  Thus, by the induction hypothesis and \eqref{measrel},
\begin{align}
\eta(I)=\tfrac{1}{2}[\psi_{1\#}\eta(I)+\psi_{2\#}\eta(I)]=\tfrac{1}{2}[\eta(\psi_1^{-1}(I))+\eta(\psi_1^{-1}(I))]=\tfrac{1}{2} 2^{-n}\ell=2^{-n-1}\ell.
\end{align}
It follows that there is only one positive measure of total variation $\ell$ that satisfies \eqref{measrel}.
\end{proof}

We now establish a compactness result that will be useful later.
\begin{theorem}\label{compactThm}
Consider a sequence $(u_n)\in\cW$ such that $\sup_{n\in\Nat} E_n(u_n)<\infty$.  It follows that there is a subsequence, also denoted by $(u_n)$, and a $u\in W^{1,1}(-\delta,\ell)$ with $u'\in BV(-\delta,\ell)$ such that
\begin{enumerate}
\item\label{c1} $u_n\rightarrow u\ \text{in}\ W^{1,1}(-\delta,\ell)$,
\item\label{c2} $u''_n\cL\weakstar u''\ \text{in}\ \cM(-\delta,\ell)$,
\item\label{c3} $u''\ll\eta$, where $\eta=\tfrac{\ell}{\cH^s(\cC)}\cH^s\mres\cC$ with $s=\ln 2 / \ln 3$.
\end{enumerate}
\end{theorem}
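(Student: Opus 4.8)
The plan is to recast the statement in the language of Lemma~\ref{abscontm} and Lemma~\ref{etaconv} by working with the measures $\mu_n\coloneqq u_n''\cL$ and $\eta_n\coloneqq(\tfrac32)^n\cL\mres\cC_n$. By Proposition~\ref{udpzero} each $u_n''$ vanishes off $\cC_n$, so $\mu_n$ is concentrated on $\cC_n$ and $\mu_n\ll\eta_n$ with Radon--Nikod\'ym derivative $\gamma_n\coloneqq\mu_n/\eta_n=(\tfrac23)^nu_n''$ on $\cC_n$. A direct computation then identifies the bending energy with the quantity appearing in \eqref{supcond}:
\[
\tfrac12\int_0^\ell b_n|u_n''|^2\,d\cL=\tfrac b2\int_{-\delta}^\ell\gamma_n^2\,d\eta_n.
\]
Thus the first task is to bound $B_n\coloneqq\int_{-\delta}^\ell\gamma_n^2\,d\eta_n$ uniformly in $n$.

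I would obtain this bound from the energy inequality $E_n(u_n)\le M\coloneqq\sup_kE_k(u_k)$, which reads $\tfrac b2B_n-Pu_n(\ell)\le M$. This is the delicate point: the naive estimate $|u_n(\ell)|^2\le\ell^3\int_{\cC_n}|u_n''|^2\,d\cL$ from Proposition~\ref{udpzero} picks up a factor $(\tfrac32)^n$ when rewritten in terms of $B_n$, and so blows up. The fix is to exploit $\cL\mres\cC_n=(\tfrac23)^n\eta_n$: writing $u_n(\ell)=\int_0^\ell(\ell-x)u_n''\,d\cL=\int_{-\delta}^\ell(\ell-x)\gamma_n\,d\eta_n$ and applying Cauchy--Schwarz gives $|u_n(\ell)|\le(\int_{-\delta}^\ell(\ell-x)^2d\eta_n)^{1/2}B_n^{1/2}\le\ell^{3/2}B_n^{1/2}$, since $|\eta_n|=(\tfrac32)^n\cL(\cC_n)=\ell$ for every $n$. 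Young's inequality then absorbs the load term into $\tfrac b4B_n$ and yields $B_n\le\frac4b(M+\frac{P^2\ell^3}{b})$, a uniform bound, which in particular verifies \eqref{supcond}.

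With $B_n$ bounded, Cauchy--Schwarz also gives $\|u_n''\|_{L^1}=\int_{-\delta}^\ell|\gamma_n|\,d\eta_n\le\ell^{1/2}B_n^{1/2}$, uniformly. Since $u_n(0)=u_n'(0)=0$, integrating twice shows that $(u_n)$ is bounded in $W^{1,\infty}(-\delta,\ell)$ and that $(u_n')$ is bounded in $BV(-\delta,\ell)$. I would then pass to a single subsequence along which: $\mu_n=u_n''\cL\weakstar\mu$ for some $\mu\in\cM(-\delta,\ell)$ (weak* compactness of measures with uniformly bounded total variation); $u_n'\to v$ in $L^1$ for some $v\in BV$ (compact embedding $BV\hookrightarrow L^1$ on a bounded interval); and $u_n\to u$ uniformly (Arzel\`a--Ascoli, via the uniform Lipschitz bound). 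These combine to give $u_n\to u$ in $W^{1,1}$ with $u'=v\in BV$, establishing item~\ref{c1}.

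Finally I would identify the limits. For $\varphi\in C^1_c(-\delta,\ell)$, integration by parts gives $\int\varphi\,d\mu=\lim_n\int\varphi u_n''\,d\cL=-\lim_n\int\varphi'u_n'\,d\cL=-\int\varphi'u'\,d\cL=\int\varphi\,du''$, so $\mu=u''$ as measures and item~\ref{c2} follows. Lemma~\ref{etaconv} supplies $\eta_n\weakstar\eta$; combined with $\mu_n\weakstar\mu$, $\mu_n\ll\eta_n$, and the uniform bound on $B_n$, Lemma~\ref{abscontm} gives $\mu\ll\eta$, that is $u''\ll\eta$, which is item~\ref{c3}. The only genuinely hard step is the uniform energy estimate of the second paragraph; everything else is standard compactness together with the two lemmas already proved.
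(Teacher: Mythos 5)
Your proposal is correct and follows essentially the same route as the paper's proof: the key uniform bound $\sup_n\big(\tfrac{2}{3}\big)^n\int_{\cC_n}|u_n''|^2\,d\cL<\infty$ obtained by absorbing the load term (your Cauchy--Schwarz against $\eta_n$, using $|\eta_n|=\ell$, is just a repackaging of the paper's H\"older estimate $|u_n(\ell)|\leq \ell^{3/2}(\tfrac{2}{3})^{n/2}\norm{u_n''}_{L^2(\cC_n)}$), then the uniform $L^1$ bound on $u_n''$, standard $W^{1,1}$/$BV$ compactness for items~\ref{c1} and~\ref{c2}, and Lemma~\ref{etaconv} together with Lemma~\ref{abscontm} for item~\ref{c3}. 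There are no gaps.
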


\begin{proof}
Since $u_n(0)=u_n'(0)=0$, it follows that
$$
|u_n(\ell)|\leq \int_0^\ell |u_n'(y)|dy\leq  \ell \int_{0}^\ell |u_n''(x)|dx,
$$
and applying Proposition~\ref{udpzero} and H\"older's inequality yields
\beqn
|u_n(\ell)|\leq\ell \int_{0}^\ell \chi_{\cC_n}|u_n''|d\cL\leq \ell \norm{\chi_{\cC_n}}_{L^2(\cC_n)} \norm{u_n''}_{L^2(\cC_n)}=\ell^{3/2}\big(\tfrac{2}{3}\big)^{n/2}\norm{u_n''}_{L^2(\cC_n)}.
\eeqn
By using this inequality and $\sup_n E_n(u_n)<+\infty$, for any $\alpha>0$ we find that
\beqn
\sup_{n\in\Nat}\int_{\cC_n}\big(\tfrac{2}{3}\big)^n\big( b-\alpha \ell^3\big)|u_n''|^2d\cL-\frac{P^2}{\alpha}\le\sup_{n\in\Nat}\int_0^\ell b_n|u_n''|^2d\cL-\alpha|u_n(\ell)|^2-\frac{P^2}{\alpha}<\infty,
\eeqn
where we also used Proposition \ref{udpzero}.
Choosing $\alpha=\frac{1}{2}\ell^{-3}b$ then shows
\beqn\label{L2bound}
\sup_{n\in\Nat} \big(\tfrac{2}{3}\big)^n\int_{\cC_n}|u_n''|^2d\cL<\infty.
\eeqn
From here we can apply H\"older's inequality to find that
\beqn\label{udpL1bound}
\sup_{n\in\Nat} \int_{-\delta}^\ell |u_n''|d\cL\leq\sup_{n\in\Nat} \norm{\chi_{\cC_n}}_{L^2(\cC_n)} \norm{u_n''}_{L^2(\cC_n)}=\sup_{n\in\Nat}\ \ell^{1/2}\big(\tfrac{2}{3}\big)^{n/2}\norm{u_n''}_{L^2(\cC_n)}<\infty.
\eeqn
Since $u_n(0)=u_n'(0)=0$, this also implies
\beqn\label{uW11bound}
\sup_{n\in\Nat}\int_{-\delta}^\ell |u_n|d\cL<\infty\qquad\text{and}\qquad  \sup_{n\in\Nat}\int_{-\delta}^\ell |u_n'|d\cL<\infty.
\eeqn
It follows from \eqref{udpL1bound} and \eqref{uW11bound} that there is a $u\in W^{1,1}(-\delta,\ell)$ such that, up to a subsequence, $u_n\to u$ in $W^{1,1}(-\delta,\ell)$.  Moreover, from \eqref{udpL1bound} and \eqref{uW11bound}$_2$ that there is a $v\in BV(-\delta,\ell)$ such that, up to subsequence, $u'_n\weakstar v\in BV(-\delta,\ell)$.  Putting these two results together we obtain $v = u'$.  

To establish Item \ref{c3}, begin by considering the sequence of measures defined by $\eta_n\coloneqq\big(\tfrac{3}{2}\big)^n \cL\mres \cC_n$.  From Lemma~\ref{etaconv} we know that $\eta_n\weakstar \eta$, where \eqref{measrep} holds.  Now notice that for $n\in\Nat$ and a Borel set $\cB\subseteq (-\delta,\ell)$,
\beqn
(u''_n\cL)(\cB)=\int_\cB u''_nd\cL=\int_{\cB\cap \cC_n} u''_nd\cL
= \int_\cB\big(\tfrac{2}{3}\big)^n u''_n d\eta_n,
\eeqn
from which it follows that $u''_n\cL\ll\eta_n$, and so we can define $\gamma_n\coloneqq\frac{u''_n\cL}{\eta_n}=(\tfrac{2}{3})^n u_n''$.  Moreover, we have
\beqn\label{Eusingeta}
\int_{-\delta}^\ell \gamma_n^2 d\eta_n=\int_{\cC_n} \big(\tfrac{2}{3}\big)^{2n}|u_n''|^2 (\tfrac{3}{2})^{n}d\cL= \int_{\cC_n}\big(\tfrac{2}{3}\big)^{n}|u_n''|^2d\cL.
\eeqn
Thus, it follows from \eqref{L2bound} that
\beqn
\sup_{n\in\Nat} \int_{-\delta}^\ell \gamma_n^2 d\eta_n<\infty.
\eeqn
Using Lemma~\ref{abscontm} with $\mu_n\coloneqq u''_n\cL$ and $\mu=u''$, we now find that $u''\ll\eta$.
\end{proof}

Motivated by the previous result, we consider the following function space:
\beqn
\cW_\infty\coloneqq\{u\in W^{1,1}(-\delta,\ell)\ |\ u''\in\cM(-\delta,\ell),\ u''\ll\cH^s\mres\cC,\ \text{and}\ u=0\ \text{on}\ (-\delta,0]\}.
\eeqn
We can now state the $\Gamma$-convergence result using the topology determined by Items \ref{c1} and \ref{c2} of the previous result. For the theory of $\Gamma$-convergence and its properties we refer to Dal Maso \cite{dalmaso}.
\begin{theorem}
Let $s=\ln 2 / \ln 3$ and
\beqn\label{Endef}
E_\infty(u)\coloneqq
\begin{cases}
\displaystyle\int_\cC \frac{b\cH^s(\cC)}{2\ell} \Big(\frac{u''}{\cH^s\mres\cC}\Big)^2 d\cH^s-Pu(\ell) & \mbox{if } u\in \cW_\infty,\\
+\infty & \mbox{if } u\in  W^{2,2}(-\delta,\ell)\setminus \cW_\infty.
\end{cases}
\eeqn
The sequence of energies $E_n$ defined in \eqref{Endef} $\Gamma$-converges to $E_\infty$ in the following sense:

\begin{itemize}
\item[i)] (liminf inequality) for every $u\in W^{2,2}(-\delta,\ell)$ and every sequence $(u_n)\in W^{2,2}(-\delta,\ell)$ such that $u_n\to u$ in $W^{1,1}(-\delta,\ell)$ and  $u''_n\cL\weakstar u''$ in $\cM(-\delta,\ell)$, we have that 
$$
\liminf_{n\to+\infty}E_n(u_n)\ge E_\infty(u);
$$
\item[ii)] (recovery sequence) for every $u\in W^{2,2}(-\delta,\ell)$ there exists a sequence $(u_n)\in W^{2,2}(-\delta,\ell)$ such that $u_n\to u$ in $W^{1,1}(-\delta,\ell)$ and  $u''_n\cL\weakstar u''$ in $\cM(-\delta,\ell)$, and
\beqn\label{limsupeq}
\limsup_{n\to+\infty}E_n(u_n)\le E_\infty(u).
\eeqn
\end{itemize}
\end{theorem}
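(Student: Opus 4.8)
The plan is to verify the two $\Gamma$-convergence inequalities separately, reducing both to the reformulation of the bending energy already used in Theorem~\ref{compactThm}. Recall that with $\eta_n:=(\tfrac32)^n\cL\mres\cC_n$ and $\gamma_n:=\frac{u_n''\cL}{\eta_n}=(\tfrac23)^n u_n''$ one has, exactly as in \eqref{Eusingeta},
\[
\tfrac12\int_0^\ell b_n|u_n''|^2\,d\cL=\tfrac b2\int_{-\delta}^\ell\gamma_n^2\,d\eta_n,
\]
so that $E_n(u_n)=\tfrac b2\int\gamma_n^2\,d\eta_n-Pu_n(\ell)$ whenever $u_n\in\cW$. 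I will also use that $W^{1,1}(-\delta,\ell)\hookrightarrow C([-\delta,\ell])$, so $u_n\to u$ in $W^{1,1}$ forces $u_n(\ell)\to u(\ell)$ and the load term passes to the limit, together with the purely algebraic identity
\[
\tfrac b2\int_{-\delta}^\ell\Big(\tfrac{u''}{\eta}\Big)^2 d\eta=\int_\cC\frac{b\cH^s(\cC)}{2\ell}\Big(\frac{u''}{\cH^s\mres\cC}\Big)^2 d\cH^s,
\]
which follows from $\eta=\tfrac{\ell}{\cH^s(\cC)}\cH^s\mres\cC$ (Lemma~\ref{etaconv}) by substituting the two Radon--Nikod\'ym derivatives.

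For the liminf inequality I would first dispose of the case $\liminf_n E_n(u_n)=+\infty$, which is automatic, and otherwise pass to a subsequence along which $E_n(u_n)$ converges to this finite value; along it $u_n\in\cW$ and $\sup_n E_n(u_n)<\infty$. Using $u_n(\ell)\to u(\ell)$ to absorb the load term, I obtain $\sup_n\int\gamma_n^2\,d\eta_n<\infty$, so Lemma~\ref{abscontm} (with $\mu_n=u_n''\cL\weakstar u''$ and $\eta_n\weakstar\eta$) gives both $u''\ll\eta$, hence $u\in\cW_\infty$, and $\liminf_n\int\gamma_n^2 d\eta_n\ge\int(\tfrac{u''}{\eta})^2 d\eta$. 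Multiplying by $\tfrac b2$, subtracting $Pu(\ell)$, and applying the algebraic identity yields $\liminf_n E_n(u_n)\ge E_\infty(u)$. When $u\in W^{2,2}$, finiteness forces $u''\ll\eta$ together with $u''\in L^2(\cL)$, i.e. $u''=0$, which is consistent with the definition of $E_\infty$.

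For the recovery sequence I may assume $u\in\cW_\infty$ with $E_\infty(u)<\infty$, since otherwise $E_\infty(u)=+\infty$ and there is nothing to prove. I set $g:=\tfrac{u''}{\eta}\in L^2(\eta)$, let $\cF_n$ be the finite $\sigma$-algebra on $\cC$ generated by the intervals $I_{n,i}$, and define $h_n:=\mathbb{E}_\eta[g\mid\cF_n]$, the $\eta$-conditional expectation, which is constant on each $I_{n,i}$. Putting $u_n'':=(\tfrac32)^n h_n$ on $\cC_n$ and $u_n'':=0$ elsewhere, and integrating twice with $u_n(0)=u_n'(0)=0$, defines $u_n\in\cW\cap W^{2,2}$ with $\gamma_n=h_n$. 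Since $\eta_n(I_{n,i})=\eta(I_{n,i})=2^{-n}\ell$ and $h_n$ is piecewise constant, $\int\gamma_n^2\,d\eta_n=\int h_n^2\,d\eta$; because $\cF_n$ increases to the Borel $\sigma$-algebra of $\cC$ (the diameters $3^{-n}\ell$ tend to $0$), the martingale convergence theorem gives $h_n\to g$ in $L^2(\eta)$, whence $\int\gamma_n^2\,d\eta_n\to\int g^2\,d\eta$. The same facts give $u_n''\cL=h_n\eta_n\weakstar g\eta=u''$ and, integrating once and twice, $u_n\to u$ in $W^{1,1}$. Passing to the limit in $E_n(u_n)=\tfrac b2\int h_n^2 d\eta_n-Pu_n(\ell)$ then yields $\lim_n E_n(u_n)=E_\infty(u)$, which is stronger than the required $\limsup$ bound.

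The main obstacle is the recovery step: one must choose $u_n$ so that the energy is recovered and the two prescribed convergences hold simultaneously. The crux is the discretization of $g$ by the conditional expectations $h_n$, which is tailored to two cancellations---that $\eta_n$ and $\eta$ assign the same mass $2^{-n}\ell$ to every $I_{n,i}$, turning $\int\gamma_n^2\,d\eta_n$ into the $\eta$-integral $\int h_n^2\,d\eta$, and that $L^2(\eta)$ martingale convergence yields $h_n\to g$. Verifying the weak* convergence $h_n\eta_n\weakstar g\eta$ requires a short estimate exploiting the uniform continuity of test functions on the shrinking intervals together with the mass identity $\eta_n(I_{n,i})=\eta(I_{n,i})$, after which the $W^{1,1}$ convergence follows by integrating the weak* convergence of the second derivatives against the uniform $L^1$ bounds.
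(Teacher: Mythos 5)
Your proposal is correct, and its skeleton is the paper's own: the liminf part is essentially verbatim (absorb the load term via $u_n(\ell)\to u(\ell)$, which with finiteness of the energies and Proposition~\ref{udpzero} gives $u_n''\cL\ll\eta_n$ and $\sup_n\int\gamma_n^2\,d\eta_n<\infty$, then Lemma~\ref{abscontm} delivers both $u''\ll\eta$ and the lower semicontinuity), and your recovery sequence is \emph{identical} to the paper's in disguise: the paper sets $u_n''=\sum_i\dashint_{I_{n,i}\cap\cC}(\tfrac32)^n\gamma\,d\cH^s\,\chi_{I_{n,i}}$, and since $\cH^s(I_{n,i}\cap\cC)=2^{-n}\cH^s(\cC)$ these piecewise $\cH^s$-averages are exactly $(\tfrac32)^n\,\mathbb{E}_\eta[\gamma\,|\,\cF_n]$, i.e.\ your $h_n$; likewise the paper's auxiliary function $\varphi_n$ in the weak$^*$ verification is $\mathbb{E}[\varphi\,|\,\cF_n]$ in dual form. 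Where you genuinely diverge is the energy estimate along the recovery sequence: the paper applies Jensen's inequality on each $I_{n,i}$ to get $\int_{\cC_n}(\tfrac23)^n|u_n''|^2\,d\cL\le\frac{\cH^s(\cC)}{\ell}\int_\cC\big(\frac{u''}{\cH^s\mres\cC}\big)^2d\cH^s$ for every $n$, an elementary convexity bound that immediately yields \eqref{limsupeq}; you instead combine the mass identity $\eta_n(I_{n,i})=\eta(I_{n,i})=2^{-n}\ell$ (so $\int\gamma_n^2\,d\eta_n=\int h_n^2\,d\eta$) with $L^2(\eta)$ martingale convergence, $h_n\to\gamma$ in $L^2(\eta)$, obtaining the stronger conclusion $\lim_n E_n(u_n)=E_\infty(u)$. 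Your route buys exact energy convergence directly (though this also follows a posteriori from the paper's Jensen bound together with the liminf inequality, as always for recovery sequences), and it explains conceptually why the discretization works; the paper's route is more self-contained, using only convexity where you import the martingale convergence theorem. One small point you should make explicit: the pointwise convergence $u_n'(x)\to u'(x)$ obtained by integrating $u_n''\cL\weakstar u''$ over $[0,x]$ requires that the limit measure charge no atoms, which holds because $u''\ll\eta$ and $\eta$ is non-atomic---the paper states this, while your sketch leaves it implicit.
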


\begin{proof}
For notational convenience, we set $\eta_n\coloneqq\big(\tfrac{3}{2}\big)^n \cL\mres \cC_n$, $n\in\Nat$, and $\eta\coloneqq\frac{\ell}{\cH^s(\cC)}\cH^s\mres\cC$.

First we establish the liminf inequality.  Let $(u_n)\in W^{2,2}(-\delta,\ell)$  and $u\in W^{2,2}(-\delta,\ell)$ such that $u_n\to u$ in $W^{1,1}(-\delta,\ell)$ and  $u''_n\cL\weakstar u''$ in $\cM(-\delta,\ell)$.  
We can assume that $\liminf_{n\rightarrow \infty}E_n(u_n)$ is finite, otherwise there is nothing to prove.  In this case, by possibly passing to a subsequence, we can assume that $\sup_{n\in\Nat} E_n(u_n)<\infty$. Thus $(u_n)\in \cW$ and from Theorem~\ref{compactThm} it follows that $u\in\cW_\infty$; in particular, $u''\ll\eta$.  Moreover, from Lemma~\ref{abscontm} with $\mu_n:=u_n'' \cL$ and $\mu=u''$, we have
\beqn
\liminf_{n\rightarrow\infty} \int_{-\delta}^\ell \Big(\frac{u''_n\cL}{\eta_n}\Big)^2d\eta_n\geq \int_{-\delta}^\ell \Big(\frac{u''}{\eta}\Big)^2d\eta.
\eeqn
Utilizing this and \eqref{Eusingeta} we easily obtain
\begin{multline}
\liminf_{n\rightarrow\infty} E_n(u_n)=\liminf_{n\rightarrow\infty} \int_{\delta}^\ell \frac{b}{2}  \Big(\frac{u''_n\cL}{\eta_n}\Big)^2 d\eta_n-Pu(\ell)\\
\geq \int_{-\delta}^\ell \frac{b}{2} \Big(\frac{u''}{\eta}\Big)^2d\eta-Pu(\ell)=\int_\cC \frac{b\cH^s(\cC)}{2\ell} \Big(\frac{u''}{\cH^s\mres\cC}\Big)^2 d\cH^s-Pu(\ell).
\end{multline}

To establish the recovery sequence, it suffices to consider $u\in \cW_\infty$ for which $E_\infty(u)<+\infty$.  Set $\gamma\coloneqq\frac{u''}{\eta}$.  Recalling the notation in \eqref{Cnint}, define $u_n\in\cW$ so that
\beqn\label{undef}
u_n''(x):=\sum_{i=1}^{2^n}\dashint_{I_{n,i}\cap\cC} \big(\tfrac{3}{2}\big)^n \gamma d\cH^s \chi_{I_{n,i}}(x),\quad x\in(0,\ell).
\eeqn
Notice that $u_n''=0$ off of $\cC_n$.  First we show that $u_n''\cL\weakstar u''$.  Towards this end, let $\varphi\in C_c(-\delta,\ell)$ and using \eqref{Cnint}$_1$ we have
\begin{align}
\int_{-\delta}^\ell \varphi u_n''d\cL&=\sum_{j=1}^{2^n}\int_{I_{n,j}} \varphi u_n''d\cL\\
&= \sum_{j=1}^{2^n} \Big(\int_{I_{n,j}} \varphi d\cL\, \cH^s(I_{n,j}\cap\cC)^{-1}\big(\tfrac{3}{2}\big)^n\int_{I_{n,j}\cap\cC} \gamma d\cH^s\Big).
\end{align}
From \eqref{measrep} we deduce that 
\beqn\label{ICmeas}
\cH^s(I_{n,i}\cap\cC)=\cH^s(\cC)2^{-n},
\eeqn
thus, upon setting
\beqn
\varphi_n\coloneqq\sum_{i=1}^{2^n} \dashint_{I_{n,i}} \varphi d\cL \chi_{I_{n,i}},
\eeqn
we obtain that
\beqn
\int_{-\delta}^\ell \varphi u_n''d\cL=
 \sum_{j=1}^{2^n} \Big(\dashint_{I_{n,j}} \varphi d\cL\, \frac{\ell}{\cH^s(\cC)}\int_{I_{n,j}\cap\cC} \gamma d\cH^s\Big)
=\int_\cC \varphi_n \gamma d\eta=\int_{-\delta}^\ell \varphi_n du''.
\eeqn
Since $E_\infty(u)<+\infty$ we have that $\frac{u''}{\cH^s\mres\cC}$ is a square integrable function on $(-\delta,\ell)$ with respect to the measure $\cH^s\mres\cC$. Thus, to conclude our proof that $u_n''\cL\weakstar u''$ it suffices to show that $\varphi_n\rightarrow\varphi$ in $L^2(-\delta,\ell)$ with respect to the measure $\cH^s\mres\cC$.  To establish this, begin by fixing $\ve>0$ and find $\delta>0$ such that if $|x-y|<\delta$, then $|\varphi(x)-\varphi(y)|<\ve$.  Then, for $n$ large enough so that $3^{-n}\ell <\delta$, we have
\begin{align}
\int_\cC |\varphi_n-\varphi|^2 d\cH^s = \sum_{i=1}^{2^n} \int_{I_{n,i}\cap\cC} \Big| \dashint_{I_{n,i}}\varphi d\cL-\varphi\Big|^2 d\cH^s\leq \sum_{i=1}^{2^n} \cH^s(I_{n,i}\cap\cC)\ve^2=\cH^s(\cC)\ve^2,
\end{align}
which establishes the desired convergence.  

Next we show that $u'_n\rightarrow u'$ in $L^1(-\delta,\ell)$.  We have $u'_n(x)=0$ for $x\in(-\delta,0]$ and
\beqn
u'_n(x)=\int_0^x u_n'' d\cL.
\eeqn
Since $u_n''\cL\weakstar u''$ and $u''$ has no atoms, it follows that $u_n'\rightarrow u'$ pointwise.  Moreover, since
\beqn
|u'_n(x)-u'(x)|\leq |u''_n\cL([0,x])|+|u''([0,x])|\leq 2 |u''|([0,\ell]),
\eeqn
the dominated convergence theorem implies that $u_n'\rightarrow u'$ in $L^1(-\delta,\ell)$.  Since $u_n(0)=u(0)=0$, it immediately follows that $u_n\rightarrow u$ in $L^1(-\delta,\ell)$.  Thus, the sequence $(u_n)$ converges to $u$ in the desired topology.

We can now establish the recovery sequence condition.  A calculation using \eqref{undef}, \eqref{ICmeas}, Jensen's inequality, and the definition $\gamma=\frac{u''}{\eta}$ yields
\begin{align*}
\int_{\cC_n} \big(\tfrac{2}{3}\big)^n|u_n''|^2 d\cL&=\sum_{i=1}^{2^n} \int_{I_{n,i}} \big(\tfrac{2}{3}\big)^n \Big[ \dashint_{I_{n,i}\cap \cC}\big(\tfrac{3}{2}\big)^n\gamma d\cH^s \Big]^2 d\cL\\
&\leq \sum_{i=1}^{2^n}\big(\tfrac{3}{2}\big)^n |I_{n,i}| \cH^s(I_{n,i}\cap\cC)^{-1} \int_{I_{n,i}\cap\cC} \gamma^2 d\cH^s\\
&= \sum_{i=1}^{2^n}\big(\tfrac{3}{2}\big)^n \tfrac{\ell}{3^n} \tfrac{2^n}{\cH^s(\cC)} \int_{I_{n,i}\cap\cC} \gamma^2 d\cH^s\\
&\le\frac{\ell}{\cH^s(\cC)}\int_\cC\gamma^2 d\cH^s
=\frac{\cH^s(\cC)}{\ell} \int_\cC \Big(\frac{u''}{\cH^s\mres\cC}\Big)^2 d\cH^s.
\end{align*}
It now follows from the definition of the energy $E_n$ in \eqref{Endef} that \eqref{limsupeq} holds.
\end{proof}

The stationary points of the limiting energy $E_\infty$ can be characterized exactly, as the next result shows.
\begin{proposition}
If $u\in\cW_\infty$ is a stationary point of $E_\infty$, then
\beqn
u(x)=\frac{\ell P}{b\cH^s(\cC)} \int_0^x \int_{\cC\cap[0,y]} (\ell-z) d\cH^s(z)dy,\quad x\in(0,\ell).
\eeqn
\end{proposition}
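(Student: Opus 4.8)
The plan is to treat $E_\infty$ as a quadratic functional on the linear space $\cW_\infty$ and to derive its Euler--Lagrange equation by setting the first variation to zero. Since $\cW_\infty$ is a vector space, a stationary point $u$ is one for which $\frac{d}{dt}E_\infty(u+tv)\big|_{t=0}=0$ for every admissible direction $v\in\cW_\infty$. Writing $k_\cC\coloneqq\frac{u''}{\cH^s\mres\cC}$ and $k^v\coloneqq\frac{v''}{\cH^s\mres\cC}$ for the Radon--Nikod\'ym densities (which lie in $L^2(\cC,\cH^s\mres\cC)$ since the associated energies are finite), the quadratic-plus-linear structure of $E_\infty$ gives
\[
\frac{d}{dt}E_\infty(u+tv)\Big|_{t=0}=\frac{b\cH^s(\cC)}{\ell}\int_\cC k_\cC\,k^v\,d\cH^s-Pv(\ell),
\]
so stationarity is the requirement that this expression vanish for all such $v$.

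The next step is to rewrite the boundary term $v(\ell)$ in terms of $k^v$. Because $v=0$ on $(-\delta,0]$ and $v''\ll\cH^s\mres\cC$ has no atoms (a single point has zero $\cH^s$-measure for $s>0$), the good representative of $v'$ is continuous with $v(0)=v'(0)=0$; hence $v'(y)=\int_{\cC\cap[0,y]}k^v\,d\cH^s$. Integrating in $y$ and swapping the order of integration by Tonelli's theorem yields
\[
v(\ell)=\int_0^\ell\int_{\cC\cap[0,y]}k^v(z)\,d\cH^s(z)\,dy=\int_\cC(\ell-z)\,k^v(z)\,d\cH^s(z).
\]
Substituting this into the stationarity condition collects everything into
\[
\int_\cC\Big[\frac{b\cH^s(\cC)}{\ell}\,k_\cC(z)-P(\ell-z)\Big]k^v(z)\,d\cH^s(z)=0
\]
for every admissible $v$.

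The crux is then a fundamental-lemma argument showing that $k^v$ exhausts all of $L^2(\cC,\cH^s\mres\cC)$ as $v$ ranges over $\cW_\infty$. Given any $g\in L^2(\cC,\cH^s\mres\cC)$, I would check that $v(x)\coloneqq\int_0^x\int_{\cC\cap[0,y]}g\,d\cH^s\,dy$ (extended by $0$ on $(-\delta,0]$) defines an element of $\cW_\infty$ with $\frac{v''}{\cH^s\mres\cC}=g$: the inner integral is bounded, so $v$ is Lipschitz and lies in $W^{1,1}$, while its distributional second derivative is exactly $g\,\cH^s\mres\cC\in\cM(-\delta,\ell)$. This surjectivity, together with the care needed at the boundary point $x=0$ and the Tonelli interchange, is the step I expect to require the most attention. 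Granting it, the bracketed factor must vanish $\cH^s$-almost everywhere on $\cC$, giving the Euler--Lagrange equation
\[
\frac{u''}{\cH^s\mres\cC}(z)=\frac{\ell P}{b\cH^s(\cC)}(\ell-z),\qquad \cH^s\text{-a.e. }z\in\cC.
\]

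Finally I would integrate twice. From $u=0$ on $(-\delta,0]$ and the absence of atoms of $u''$ at $0$ we again have $u(0)=u'(0)=0$, so the same computation as for $v$ gives $u'(x)=\frac{\ell P}{b\cH^s(\cC)}\int_{\cC\cap[0,x]}(\ell-z)\,d\cH^s(z)$ and then $u(x)=\int_0^x u'(y)\,dy$, which is precisely the asserted formula. Apart from the surjectivity/fundamental-lemma step, everything is the standard calculus-of-variations argument adapted to integration against $\cH^s\mres\cC$.
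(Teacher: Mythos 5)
Your proof is correct and follows essentially the same route as the paper: both hinge on the Fubini interchange $v(\ell)=\int_\cC(\ell-z)\,\frac{v''}{\cH^s\mres\cC}\,d\cH^s$ and arrive at the identical pointwise Euler--Lagrange equation $\frac{b\cH^s(\cC)}{\ell}\,\xi(z)=P(\ell-z)$, followed by two integrations. The only difference is one of detail: where the paper compresses the variational step into ``a standard argument,'' you spell it out --- first variation, fundamental lemma, and the surjectivity of $v\mapsto\frac{v''}{\cH^s\mres\cC}$ onto $L^2(\cC,\cH^s\mres\cC)$ --- which is precisely the step the paper leaves implicit.
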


\begin{proof}
We set $\xi\coloneqq\frac{u''}{\cH^s\mres\cC}$. Begin by noticing that if $u\in\cW_\infty$ than
\begin{align}
u(\ell)&=\int_0^\ell \int_{[0,x]\cap\cC} \xi(z)\,d\cH^s(z)dx=\int_\cC\int_0^\ell \chi_{[0,x]}(z)\xi(z)dx\,d\cH^s(z)=\int_\cC (\ell-z)\xi(z)\,d\cH^s(z).
\end{align}
Substituting this into the formula for $E_\infty$ shows that
\beqn
E_\infty(u)=\int_\cC\Big[ \frac{b\cH^s(\cC)}{2\ell} \xi(z)^2-P(\ell-z)\xi(z)\Big]d\cH^s(z).
\eeqn
A standard argument implies that the Euler--Lagrange equation associated with $E_\infty$ is
\beqn
\frac{b\cH^s(\cC)}{\ell}\xi(z)-P(\ell-z)=0,\qquad z\in(0,\ell)
\eeqn
and, so, for $x\in (0,\ell)$
\beqn
u(x)=\int_0^x\int_{\cC\cap[0,y]} \xi(z) d\cH^s(z) dy=\frac{\ell P}{b\cH^s(\cC)} \int_0^x \int_{\cC\cap[0,y]} (\ell-z) d\cH^s(z)dy.
\eeqn
\end{proof}

\section*{Acknowledgments}
RP acknowledge the support from the University of Pisa through the project PRA2022-69 and that of
the Italian National Group of Mathematical Physics of INdAM.

\bibliographystyle{acm}
\bibliography{smallbendbib}

\end{document}